
\documentclass[12pt]{amsart}
\usepackage[margin=1in]{geometry}
\usepackage[hidelinks]{hyperref}

\usepackage{amsmath,amsthm,amssymb}

\usepackage{tikz}

\usepackage{booktabs}  
\usepackage{mathtools} 
\usepackage{units}     


\theoremstyle{plain}
\newtheorem{theorem}{Theorem}
\newtheorem{proposition}{Proposition}[section]
\newtheorem{corollary}[theorem]{Corollary}
\newtheorem{lemma}[proposition]{Lemma}

\newenvironment{customthm}[1]
  {\innercustomthm}
  {\endinnercustomthm}

\newenvironment{customcor}[1]
  {\innercustomcor}
  {\endinnercustomcor}

\theoremstyle{definition}
\newtheorem{definition}[proposition]{Definition}
\newcommand{\defn}[1]{\textbf{#1}}

\theoremstyle{remark}
\newtheorem*{example}{Example}

\newcommand{\script}[1]{\mathcal{#1}}              
\newcommand{\defeq}{\coloneqq}                     
\newcommand{\set}[2]{\left\{{#1}\,:\,{#2}\right\}} 
\newcommand{\card}[1]{\#\!\left(#1\right)}         
\newcommand{\union}{\cup}                          
\newcommand{\intersect}{\cap}                      
\newcommand{\NN}{\mathbb{N}}                       
\newcommand{\ZZ}{\mathbb{Z}}                       
\newcommand{\QQ}{\mathbb{Q}}                       
\newcommand{\RR}{\mathbb{R}}                       
\newcommand{\abs}[1]{\left|#1\right|}              
\newcommand{\vect}[1]{\Hat{#1}}                    
\DeclareMathOperator{\lex}{lex}                    

\newcommand{\Semi}{\script{S}}                     
\newcommand{\Poly}{\script{P}}                     
\newcommand{\Cone}{\script{C}}                     


\title[Quasi-polynomial growth of numerical and affine semigroups]{Quasi-polynomial growth of numerical and \\ affine semigroups with constrained gaps}

\author{Michael DiPasquale}
\address{Dept.~of Mathematics and Statistics, University of South Alabama, Mobile, AL, USA}
\email{mdipasquale@southalabama.edu}

\author{Bryan R.~Gillespie \and Chris Peterson}
\address{Dept.~of Mathematics, Colorado State University, Fort Collins, CO, USA}
\email{bryan.gillespie@colostate.edu}
\email{peterson@math.colostate.edu}

\thanks{\textit{Keywords}: numerical semigroup, affine semigroup, Ehrhart theory, quasi-polynomials, polytopes, lattice points, enumeration}  
\thanks{\textit{MSC classification}:
Primary:
20M14; 
Secondary:
05A15, 
05A16, 
06F05, 
20M05, 
52B20} 


\begin{document}

\begin{abstract}
    A common tool in the theory of numerical semigroups is to interpret a desired class of semigroups as the integer lattice points in a rational polyhedron in order to leverage computational and enumerative techniques from polyhedral geometry.  Most arguments of this type make use of a parametrization of numerical semigroups with fixed multiplicity $m$ in terms of their $m$-Ap\'{e}ry sets, giving a representation called Kunz coordinates which obey a collection of inequalities defining the Kunz polyhedron.  In this work, we introduce a new class of polyhedra describing numerical semigroups in terms of a truncated addition table of their sporadic elements.  Applying a classical theorem of Ehrhart to slices of these polyhedra, we prove that the number of numerical semigroups with $n$ sporadic elements and Frobenius number $f$ is polynomial up to periodicity, or quasi-polynomial, as a function of $f$ for fixed $n$.  We also generalize this approach to higher dimensions to demonstrate quasi-polynomial growth of the number of affine semigroups with a fixed number of elements, and all gaps, contained in an integer dilation of a fixed polytope.
\end{abstract}

\maketitle

\section{Introduction}

A \emph{numerical semigroup} is a cofinite additive submonoid of $\NN = \{0, 1, 2, \ldots\}$, i.e.\ a subset of the nonnegative integers with finite complement which contains 0 and is closed under taking sums.  The discrete, number-theoretic nature of such semigroups lends them to interesting combinatorial analysis, and in recent decades a significant interest has grown in enumerative and algorithmic questions: given one or more integer statistics assigned to numerical semigroups, how many numerical semigroups are there with specified values for these statistics, and how can the set of such semigroups be efficiently represented and computed?  If one statistic is allowed to vary, what is the asymptotic behavior of these sets?

Several important statistics feature prominently in this enumerative study.  We write $\Semi$ for the collection of all numerical semigroups.  If $S \in \Semi$, then the elements of $\NN \setminus S$ are called the \emph{gaps} or \emph{holes} of $S$, and the number of gaps is called the \emph{genus} of $S$.  The largest gap of $S$ is called its \emph{Frobenius number}, and the smallest positive element of $S$ is called its \emph{multiplicity}.  The elements of $S$ less than its Frobenius number are called \emph{sporadic elements}.  Any numerical semigroup contains a distinguished finite collection of \emph{minimal generators} of size at most its multiplicity, given by the positive semigroup elements not expressible as a sum of two other positive elements.  The number of minimal generators of $S$ is called its \emph{embedding dimension}, and a semigroup whose embedding dimension is equal to its multiplicity is said to have \emph{maximal embedding dimension}.

A fundamental construction which has used to approach such questions is the \emph{semigroup tree}, a rooted tree structure imposed on $\Semi$ by starting with root $S_0 = \NN$, and letting $S'$ be a descendent of $S$ if $S = S' \union \{f\}$ for $f$ the Frobenius number of $S'$.  It is easy to see that the levels of the semigroup tree enumerate the numerical semigroups by increasing genus, so the construction plays a deep role in understanding the function $N(g)$ counting the total number of numerical semigroups with genus $g$.  In 2008, Bras-Amor\'os~\cite{bras2008fibonacci} conjectured based on computational evidence that the number of such semigroups grew at a rate approximating a Fibonacci recurrence, implying a limiting ratio on the sizes of successive layers of the semigroup tree given by the golden ratio $\varphi = (1 + \sqrt{5}) / 2$.  Subsequent analysis of the semigroup tree in~\cite{bras2009bounds} and~\cite{elizalde_improved_2010} produced bounds on the growth of $N(g)$ which supported the conjectured asymptotics.  Then in 2013, building on work of Bras-Amar\'os and Bulygin~\cite{bras-amoros_towards_2009} and Zhao~\cite{zhao_constructing_2010}, Alex Zhai proved in~\cite{zhai2013fibonacci} that $\lim_{g\rightarrow \infty} N(g) / \varphi^g = C$ for some constant $C \geq 3.78$, confirming the conjectured exponential rate of growth.  A survey by Kaplan~\cite{kaplan2017counting} provides a lucid exposition on the background and proof techniques of this result.  Further results related to the semigroup tree can be found in~\cite{blanco2012enumeration,blanco2013tree,branco2021set,rosales-1996}.

Another perspective which has proven useful for counting arguments and computations is to represent the numerical semigroups of a desired class as the integer lattice points contained in a rational polyhedron, which allows theoretical and algorithmic tools from convex geometry to be applied.  The most studied construction of this type is the \emph{Kunz polyhedron}, which was introduced in~\cite{kunz-1987-klassifikation}, and independently in~\cite{rosales_systems_2002}.  The integer lattice points of the $m$-Kunz polyhedron in $\RR^{m-1}$ represent the numerical semigroups with multiplicity $m$ by their \emph{Kunz coordinates}, defined for a semigroup $S$ as the unique tuple $(k_1, \ldots, k_{m-1})$ such that $k_i$ is minimal satisfying $k_i m + i \in S$ for each $i$.

One way that the Kunz polyhedron and Kunz coordinates have been used is as a concrete geometric setting in which computing the semigroups of a desired type is faster and simpler than with a less structured representation; see for instance~\cite{blanco2011counting,blanco2012enumeration,blanco_set_2012,blanco2013tree,branco2021set}.  Another application has been to apply Ehrhart theory, a class of results characterizing the number of integer lattice points in families of rational polytopes, to enumerative questions about numerical semigroups.  In~\cite{kaplan2012counting}, this approach was used to show that the numerical semigroups of fixed multiplicity $m$ and genus $g$ are \emph{eventually quasi-polynomial} as a function of $g$, i.e.\ that there is a modulus $N$ and polynomial functions $p_i, i = 0, \ldots, N-1$ such that for large enough $g$ the number of such semigroups is given by $p_i(g)$ for $g \equiv i \pmod{N}$.  Later in~\cite{alhajjar2019numerical}, the quasi-polynomial characterization was applied to show that asymptotically in $g$, almost all numerical semigroups of genus $g$ and fixed multiplicity $m$ have maximum embedding dimension.

In this work, we introduce a new polyhedral representation of numerical semigroups described in terms of the addition relations between their sporadic elements.  Specifically, sums of the positive sporadic elements $x_1 < \cdots < x_n$ of a numerical semigroup $S$ with Frobenius number $f$ can be described by a function $\tau : \{1, \ldots, n\}^2 \to \{1, \ldots, n, \infty\}$, where $\tau(i, j) = k < \infty$ if $x_i + x_j = x_k$, and $\tau(i, j) = \infty$ if $x_i + x_j > f$.  We call such a function a \emph{truncated addition table}, or just an \emph{addition table}, over $n$ elements.  Given an addition table $\tau$, we define a polytope $\Poly_\tau$ called the \emph{sporadic relation polytope}, or \emph{SR-polytope}, such that the integer lattice points of $f \Poly_\tau$ correspond with the numerical semigroups with Frobenius number $f$ whose sporadic elements have addition table $\tau$:

\begin{customthm}{\ref{thm:num-semigroup-sr-polytope-bijection}}
	Let $n \in \NN$, and let $\tau$ be an addition table over $n$ elements.  Then for positive $f \in \NN$, the integer lattice points in $f \Poly_\tau$ are in one-to-one correspondence with the numerical semigroups with Frobenius number $f$ whose positive sporadic elements have addition table $\tau$ via the correspondence
	\[
	(x_1, \ldots, x_n) \mapsto \{0\} \union \{x_1, \ldots, x_n\} \union \{f+1, \rightarrow \}
	\]
\end{customthm}

A classical result in Ehrhart theory states that the number of integer lattice points in the dilations $fP$ of a rational polytope $P$ is given by a quasi-polynomial, so by summing over the possible addition tables $\tau$, we are able to conclude the following enumerative result.

\begin{customcor}{\ref{cor:num-semigroups-quasipoly}}
	For fixed $n \in \NN$, the number of numerical semigroups with $n+1$ sporadic elements and Frobenius number $f$ is a quasi-polynomial function of $f$ with degree $n$ and constant leading coefficient $1/(2^n n!)$.
\end{customcor}

The principle of describing a discrete semigroup in terms of an appropriate truncated addition table is general enough that it may be applied in more complicated settings.  As an example of this, we present a related construction for multi-dimensional affine semigroups.  A \emph{$d$-dimensional $\Cone$-semigroup} with respect to a pointed rational cone $\Cone \subseteq \RR^d$ is a cofinite additive submonoid of $\Cone \intersect \ZZ^d$, or equivalently, a subset of $\Cone \intersect \ZZ^d$ which has finite complement in $\Cone$, contains 0, and is closed under addition.  We say that a rational polytope $P$ containing a neighborhood of the origin in $\Cone$ is $\Cone$-\emph{compatible} if for any $x \in \Cone \setminus P$, the translated cone $x + \Cone$ is a subset of $\Cone \setminus P$.  For an addition table $\tau$ over $n$ elements and a $\Cone$-compatible polytope $P$, we define a set $\Poly_\tau^{(\Cone,P)} \subseteq \RR^{d \times n}$ similar to the polytope $P_\tau$ defined for numerical semigroups which is instead a union of a (large) number of polytopes with mixed open and closed faces.  We prove an analogue of Theorem \ref{thm:num-semigroup-sr-polytope-bijection}, and establish the following enumerative result for affine semigroups:

\begin{customcor}{\ref{cor:affine-quasipolynomial-growth}}
	Let $\Cone \subseteq \RR^d$ be a pointed rational polyhedral cone, and let $P \subseteq \RR^d$ be a $\Cone$-compatible polytope.  Then for fixed $n \in \NN$, the number of $\Cone$-semigroups with $n+1$ elements in the polytope $\alpha P$ is a quasi-polynomial function of $\alpha$ with degree $nd$.
\end{customcor}

The remainder of the article will be organized as follows.  In Section \ref{sec:two-examples} we present two instructional examples motivating the general constructions that follow.  In Section \ref{sec:polytope-decomposition} we define the polytopes $P_\tau$ and prove Theorem \ref{thm:num-semigroup-sr-polytope-bijection} and Corollary \ref{cor:num-semigroups-quasipoly}, and in Section \ref{sec:affine-setting} we generalize these results to the setting of affine semigroups and prove Theorem \ref{thm:BijectionAllDim} and Corollary \ref{cor:affine-quasipolynomial-growth}.

\section{The first two cases} \label{sec:two-examples}

We will start by working out a few small examples by hand to illustrate the main idea.  Throughout the following two sections, we will write $\Semi(n, f)$ for the set of numerical semigroups with Frobenius number $f$ and $n$ positive sporadic elements, and $N(n, f)$ for the cardinality of this set.  When writing explicit numerical semigroups, we will use a right arrow ($\rightarrow$) to mean the set of numbers larger than the last number in some set brackets; for instance, $\{3, 5, 6, 8, \rightarrow\}$ means the set $\{3, 5, 6\} \union \{8, 9, 10, \ldots\}$.  We will assume basic familiarity with, but no specialized knowledge about, the topic of convex polytopes; see \cite{ziegler_lectures_1995} for a generous introduction.  For the remainder of this work we will write $[n]$ to denote the set $\{1, \ldots, n\}$.

Let us begin by enumerating the semigroups in $\Semi(1,f)$.  If $S \in \Semi(1,f)$, then $S \intersect [f]$ consists of a single element, which we denote by $x_1$.  Observe that $S$ is determined by $x_1$ and $f$, since $S = \{0, x_1, f+1, \rightarrow\}$.  Thus to count how many semigroups are in $\Semi(1,f)$ it suffices to count the possibilities for $x_1$.  (This observation also carries over to $\Semi(n,f)$ without any complications.)  So what properties must $x_1$ satisfy?  Since $S \intersect [f] = \{x_1\}$ and $x_1 + x_1 \in S$, we must have $2x_1 > f$, for otherwise, $2x_1$ would be a second element of $S$ in the interval $[f]$.  To ensure the restriction on the Frobenius number, we must also have $x_1 < f$.  As long as $x_1$ satisfies these properties, $S = \{0, x_1, f+1, \rightarrow\}$ is a semigroup in $\Semi(1,f)$.

It follows that every semigroup in $\Semi(1,f)$ corresponds uniquely to an integer $x_1 \in (f/2, f)$.  Notice that $(f/2, f)$ is the $f$-dilate of the interval $(1/2, 1)$ and we can determine that
\[
N(1,f) =
\begin{cases}
	f / 2 - 1/2 & f \text{ is odd} \\
	f / 2 - 1  & f \text{ is even}
\end{cases}
\]

Recall that a function $q : \ZZ \to \ZZ$ is called a \defn{quasi-polynomial} if there is a positive integer modulus $K$ such that the restriction of $q$ to congruence classes modulo $K$ is polynomial.  The degree of a quasi-polynomial is the largest degree that appears among its associated polynomials.  The function $N(1,f)$ is a degree 1 quasi-polynomial with period $K = 2$.

Now consider the somewhat more involved case of numerical semigroups in $\Semi(2, f)$.  A semigroup $S \in \Semi(2, f)$ must contain exactly two elements strictly between 0 and $f$, say $0 < x_1 < x_2 < f$, and must include all numbers larger than $f$.

What are the possibilities for $x_1$ and $x_2$?  Unlike the situation for $n=1$, there are multiple cases to consider.  Since $S$ is a semigroup, it must be the case that $x_1 + x_1$ is also an element of $S$.  If $x_1 + x_1 > f$, then $x_2$ may be chosen arbitrarily between $x_1$ and $f$, since in this case we also are assured that $x_1 + x_2$ and $x_2 + x_2$ are larger than $f$.  With these restrictions, we see that $S = \{0, x_1, x_2, f+1, \rightarrow\}$ works perfectly well.

The semigroups of this type thus correspond with the integer lattice points $(x_1, x_2)$ which satisfy the inequalities $0 < x_1 < x_2 < f$ and $x_1 + x_1 > f$.  The region described by these inequalities is the interior of the triangle with edges along the lines $x_1 = f/2$, $x_2 = f$, and $x_1 = x_2$.  Notice in particular that this is the $f$-dilate of a single fixed triangular region defined by the inequalities $0 < x_1 < x_2 < 1$ and $x_1 > 1/2$.  The number of lattice points in question can be computed as the number of pairs of distinct integers in the open interval $(f/2, f)$, so the desired semigroups are counted by
\[
q_1(f) = \begin{cases}
	\binom{(f-2)/2}{2} & f \text{ is even} \\
	\binom{(f-1)/2}{2} & f \text{ is odd}
\end{cases}
\]

We have one further case to consider, when $x_1 + x_1 < f$.  In this case, we must have $x_1 + x_1 = x_2$ since $x_1$ and $x_2$ are the only positive semigroup elements less than $f$.  This also imposes restrictions on the remaining sums, namely, that $x_1 + x_2$ and $x_2 + x_2$ are semigroup elements strictly larger than $x_2$, and so must also be larger than $f$.

Thus the numerical semigroups of this type correspond to lattice points $(x_1, x_2)$ satisfying $0 < x_1 < x_2 < f$, $x_1 + x_1 = x_2$, and $x_1 + x_2 > f$.  These are likewise lattice points in the $f$-dilate of a polytope, now the segment defined by $0 < x_1 < x_2 < 1$, $x_1 + x_1 = x_2$, and $x_1 + x_2 > 1$.  Cancelling $x_2$ with the equality $x_2 = 2x_1$, the defining inequalities for the region become $x_1 + x_2 = 3x_1 > f$ and $x_1 + x_1 = 2x_1 < f$.  Thus it suffices to count the number of integers $x_1$ in the interval $(f/3, f/2)$, each of which determines the corresponding value for $x_2$.  This number is given by a quasi-polynomial with period $K = 6$:
\[
q_2(f) = \begin{cases}
	(f-6) / 6, & f \equiv 0 \pmod{6} \\ 
	(f-1) / 6, & f \equiv 1 \pmod{6} \\ 
	(f-2) / 6, & f \equiv 2 \pmod{6} \\ 
	(f-3) / 6, & f \equiv 3 \pmod{6} \\ 
	(f-4) / 6, & f \equiv 4 \pmod{6} \\ 
	(f+1) / 6, & f \equiv 5 \pmod{6} \\ 
\end{cases}
\]
Combining the contributions of these two cases, we compute $N(2,f) = q_1(f) + q_2(f)$, or
\[
N(2,f)
= \begin{cases}
	f^2 / 8 + (-14f     ) / 24, & f \equiv 0 \pmod 6 \\
	f^2 / 8 + ( -8f +  5) / 24, & f \equiv 1 \pmod 6 \\
	f^2 / 8 + (-14f + 16) / 24, & f \equiv 2 \pmod 6 \\
	f^2 / 8 + ( -8f -  3) / 24, & f \equiv 3 \pmod 6 \\
	f^2 / 8 + (-14f +  8) / 24, & f \equiv 4 \pmod 6 \\
	f^2 / 8 + ( -8f + 13) / 24, & f \equiv 5 \pmod 6
\end{cases}
\]

We take two key observations away from the preceding computations.  First, we note that each semigroup in $\Semi(n,f)$, $f = 1, 2$, corresponds to a lattice point in the (relative) interior of the $f$-dilate of a polytope which may depend on algebraic relations between its sporadic elements.  Figure \ref{fig:rel-polytopes-n-2} shows the two polytopes computed above for the case of $n=2$.  Second, we observe that the number of such numerical semigroups is given by a quasi-polynomial of degree $n$ whose highest order term is independent of $f$.  We will see in Section \ref{sec:polytope-decomposition} that these two trends extend essentially as stated to the semigroups in $\Semi(n,f)$ for arbitrary $n$.

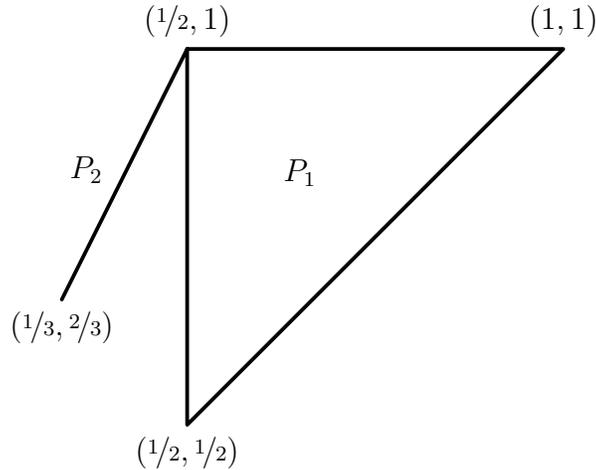
\begin{figure}[h]
	\begin{center}
		\begin{tikzpicture}[x=10cm,y=10cm]
			\draw[line width=0.5mm,line join=round,line cap=round] (1/3, 2/3)
			node[anchor=north] {$(\nicefrac{1}{3}, \nicefrac{2}{3})$}
			-- (1/2, 1)
			node[pos=0.42,label={[label distance=-2mm]120:$P_2$}] {}
			node[pos=1.0,anchor=south] {$(\nicefrac{1}{2}, 1)$}
			-- (1, 1)
			node[anchor=south] {$(1, 1)$}
			-- (1/2, 1/2)
			node[anchor=north] {$(\nicefrac{1}{2}, \nicefrac{1}{2})$}
			-- (1/2, 1);
			\node at (0.65, 0.835) {$P_1$};
		\end{tikzpicture}
	\end{center}
	\caption{Rational polytopes describing the numerical semigroups of $\Semi(2, f)$.  The integer lattice points contained in the relative interiors of the $f$-dilates of these polytopes correspond with the numerical semigroups with three sporadic elements and Frobenius number $f$.}
	\label{fig:rel-polytopes-n-2}
\end{figure}

\section{Enumeration of numerical semigroups and quasi-polynomial growth} \label{sec:polytope-decomposition}

We now present the general arguments suggested by the examples of Section \ref{sec:two-examples}.

\begin{definition}
	Let $n \in \NN$.  A \defn{truncated addition table}, or just \defn{addition table}, over $n$ elements is a function $\tau: [n]^2 \to [n] \union \{\infty\}$.  If $S$ is a numerical semigroup with Frobenius number $f$ and positive sporadic elements $x_1 < \cdots < x_n < f$, then the \defn{sporadic addition table} of $S$ is the truncated addition table $\tau$ over $n$ elements defined by
	\[
	\tau(i, j) = \begin{cases}
		k, & x_i + x_j = x_k \\
		\infty, & x_i + x_j > f
	\end{cases}
	\]
\end{definition}

For each truncated addition table $\tau$, we define a rational polytope encoding the algebraic relations specified by the table.  The points we will be interested in are the integer lattice points in the relative interior of this polytope.

\begin{definition}
	Let $n \in \NN$, and let $\tau$ be a truncated addition table over $n$ elements.  The \defn{sporadic relation polytope} or \defn{SR-polytope} of $\tau$ is the open polytope $\Poly_\tau \subseteq \RR^n$ defined by:
	\begin{itemize}
		\item \textit{Order inequalities}: $0 < x_1 < \cdots < x_n < 1$
		\item \textit{Sporadic relations}: $x_i + x_j = x_k$ for $\tau(i, j) = k < \infty$
		\item \textit{Truncation inequalities}: $x_i + x_j > 1$ for $\tau(i, j) = \infty$
	\end{itemize}
\end{definition}

Note that in the above, the SR-polytope is \emph{relatively open}, that is, it is open in the induced topology of the smallest affine subspace of $\RR^n$ which contains it.  Our first main result is that the polytope $\Poly_\tau$ represents the numerical semigroups with sporadic addition table $\tau$ in the following manner.

\begin{theorem}
	\label{thm:num-semigroup-sr-polytope-bijection}
	Let $n \in \NN$, and let $\tau$ be an addition table over $n$ elements.  Then for positive $f \in \NN$, the integer lattice points in $f \Poly_\tau$ are in one-to-one correspondence with the numerical semigroups with Frobenius number $f$ whose positive sporadic elements have addition table $\tau$ via the correspondence
	\[
	(x_1, \ldots, x_n) \mapsto \{0\} \union \{x_1, \ldots, x_n\} \union \{f+1, \rightarrow \}
	\]
\end{theorem}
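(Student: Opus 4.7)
The plan is to verify the claimed bijection directly from the definitions, checking that the map is well-defined and exhibiting an explicit inverse. Scaling the defining (in)equalities of $\Poly_\tau$ by $f$ yields the conditions $0 < x_1 < \cdots < x_n < f$, together with $x_i + x_j = x_k$ whenever $\tau(i,j) = k < \infty$ and $x_i + x_j > f$ whenever $\tau(i,j) = \infty$.  Thus an integer lattice point of $f \Poly_\tau$ is precisely an integer tuple $(x_1, \ldots, x_n)$ satisfying these conditions, and the proof reduces to checking that such tuples correspond exactly to the numerical semigroups described.

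For the forward direction, given such a tuple I would set $S = \{0\} \union \{x_1, \ldots, x_n\} \union \{f+1, \rightarrow\}$ and verify the semigroup axioms.  Containment of $0$ and cofiniteness are immediate.  Closure under addition follows by case analysis on the two summands: sums involving $0$ or involving an element $\geq f+1$ lie in $S$ trivially, and a sum $x_i + x_j$ of two sporadic elements lies in $S$ by the sporadic relation $x_i + x_j = x_k$ when $\tau(i,j) = k < \infty$, or by the truncation inequality $x_i + x_j > f$ (so $x_i+x_j \in \{f+1,\rightarrow\}$) when $\tau(i,j) = \infty$.  The order inequality $x_n < f$ ensures $f \notin \{x_1,\ldots,x_n\}$, so $f \notin S$ and $f$ is indeed the Frobenius number.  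The positive sporadic elements of $S$ are then precisely $x_1, \ldots, x_n$, with addition table $\tau$ by construction.

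For the reverse direction, if $S$ is a numerical semigroup with Frobenius number $f$ and sporadic addition table $\tau$ with positive sporadic elements $x_1 < \cdots < x_n$, then the tuple $(x_1, \ldots, x_n)$ satisfies every defining condition of $f \Poly_\tau$ directly from the definition of $\tau$ and the fact that the $x_i$ are the ordered positive integers in $S \cap [f]$.  This reverse assignment is manifestly inverse to the forward map, establishing the bijection.

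No individual step is difficult; the main obstacle is careful book-keeping around the strict inequalities in the definition of $\Poly_\tau$.  Strict order inequalities $x_i < x_{i+1}$ enforce distinctness of the sporadic elements, while the strict truncation inequality $x_i + x_j > f$ (rather than $\geq f$) is what prevents the sum from equalling $f$ and forces it into $\{f+1,\rightarrow\} \subseteq S$.  It is also worth observing that the two clauses in the definition of $\tau$ are mutually exclusive in $f\Poly_\tau$: the relation $x_i + x_j = x_k$ forces $x_i + x_j < f$, while the truncation inequality forces $x_i + x_j > f$, so a tuple in $f\Poly_\tau$ cannot accidentally satisfy any relation not already prescribed by $\tau$, and the addition table recovered from $S$ is exactly $\tau$.
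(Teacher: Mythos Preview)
Your proof is correct and follows essentially the same approach as the paper's: scale the defining inequalities by $f$, then verify the bijection in both directions by direct case analysis on sums. If anything, your version is slightly more careful than the paper's, since you explicitly check that $f$ is the Frobenius number of $S$ and that the sporadic addition table of the resulting semigroup is exactly $\tau$, points the paper leaves implicit.
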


\begin{proof}
	Note that the dilate $f \Poly_\tau$ is defined by the restrictions $0 < x_1 < \cdots < x_n < f$, $x_i + x_j = x_k$ for $\tau(i, j) = k < \infty$, and $x_i + x_j > f$ for $\tau(i, j) = \infty$.
	
	Suppose first that $(x_1, \ldots, x_n) \in f \Poly_\tau \intersect \ZZ^n$, and let $S = \{0\} \union \{x_1, \ldots, x_n\} \union \{f+1, \rightarrow\}$.  It is clear that $S$ contains 0 and has finitely many gaps, so it is only necessary to show that $S$ is closed under taking sums.  This is obvious for the sum of $0$ with any element and for the sum of any element larger than $f$ with any element, so we only need to argue that $x_i + x_j \in S$ for each $i, j$.  If $\tau(i, j) = k < \infty$, then the sporadic relations in $f \Poly_\tau$ give that $x_i + x_j = x_k \in S$.  Likewise, if $\tau(i, j) = \infty$, then the truncation inequalities imply that $x_i + x_j > f$, and since $x_i$ and $x_j$ are integers we see that their sum must be at least $f+1$, and so is in $S$.
	
	Now suppose that $S \in \Semi(n, f)$ with sporadic addition table $\tau$, and let $x_1 < x_2 < \cdots < x_n$ be the positive sporadic elements of $S$ in increasing order.  Then $S$ can be uniquely written as $\{0\} \union \{x_1, \ldots, x_n\} \union \{f + 1, \rightarrow\}$, and the integer lattice point $(x_1, \ldots, x_n)$ clearly satisfies the order inequalities for $f \Poly_\tau$.  The fact that it satisfies the sporadic relations and the truncation inequalities follows from the definition of the sporadic addition table of $S$.
	
	The fact that this map is invertible follows immediately from the order inequalities.
\end{proof}

Note that in general, the polytope $P_\tau$ may be empty if $\tau$ specifies algebraic relations which are not compatible with any numerical semigroup.  At a minimum, in order to represent a numerical semigroup as in Theorem \ref{thm:num-semigroup-sr-polytope-bijection}, the function $\tau$ must be symmetric and strictly increasing in both coordinates (but allowing for ``$\infty < \infty$'').  However, these necessary conditions are not also sufficient, and fully characterizing the addition tables representing the sporadic elements of some numerical semigroup seems to be a nuanced problem.

\begin{example}
	For the case of $n = 3$, there are a priori $4^6$ distinct symmetric truncated addition tables but only 4 of them are the sporadic addition tables of numerical semigroups, producing nonempty SR-polytopes.  These addition tables and their corresponding SR-polytopes are given in Table \ref{tbl:sg-polytopes-example}.  The addition tables are expressed by giving a list of their finite evaluations, up to reversing the order of the inputs.
	
	\begin{table}[h]
		\begin{center}
			\begin{tabular}{p{5cm} p{7cm}}
				\toprule
				Finite addition table values & Description of SR-polytope $\Poly_\tau$ \\
				\midrule
				--- &
				$0 < x_1 < x_2 < x_3 < 1$, \newline
				$x_1 + x_1 > 1$ \\
				\midrule
				$\tau(1, 1) = 2$ &
				$0 < x_1 < x_2 < x_3 < 1$, \newline
				$x_1 + x_1 = x_2$, \newline
				$x_1 + x_2 > 1$  \\
				\midrule
				$\tau(1, 1) = 3$ &
				$0 < x_1 < x_2 < x_3 < 1$, \newline
				$x_1 + x_1 = x_3$, \newline
				$x_1 + x_2 > 1$ \\
				\midrule
				$\tau(1, 1) = 2$, \newline
				$\tau(1, 2) = 3$ &
				$0 < x_1 < x_2 < x_3 < 1$, \newline
				$x_1 + x_1 = x_2$, \newline
				$x_1 + x_2 = x_3$, \newline
				$x_1 + x_3 > 1$, \newline
				$x_2 + x_2 > 1$ \\
				\bottomrule
			\end{tabular}
		\end{center}
		
		\caption{The nonempty SR-polytopes for $n = 3$}
		\label{tbl:sg-polytopes-example}
	\end{table}
\end{example}

We now turn our attention to the function $N(n, f)$ counting the number of numerical semigroups with $n$ positive sporadic elements and Frobenius number $f$.  Using the preceding characterization in terms of polytopes, we are able to apply the following fundamental result, originally due to Ehrhart in \cite{Ehrhart-1962,Ehrhart-1967} and MacDonald in \cite{macdonald_polynomials_1971}.
\begin{proposition}[\cite{beck_ehrhart-macdonald_2005}, Corollary 1.2]
	\label{prop:ehrhart-qp}
	Let $P \subseteq \RR^n$ be a rational convex polytope of dimension $d$.  Then the function
	\[
	L_P(m) \defeq \card{m P \intersect \ZZ^n}
	\]
	is a quasi-polynomial function of degree $d$.  The evaluation of this quasi-polynomial at negative integers gives
	\[
	L_P(-m) = (-1)^d L_{P^{\circ}}(m)
	\]
	where $P^\circ$ denotes the relative interior of $P$.
\end{proposition}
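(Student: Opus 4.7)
My plan is to apply the classical generating function framework for lattice point enumeration in rational cones, after first reducing to the case of a rational simplex. I would triangulate $P$ into rational $d$-simplices via a half-open decomposition (for instance using a shelling order) so that lattice point counts sum exactly over the pieces, and likewise the relative interior $P^\circ$ is partitioned consistently. For each rational simplex $\Delta$ with vertices $v_0, \ldots, v_d$ in the triangulation, build the homogenized cone $C = \mathrm{cone}\{(1, v_0), \ldots, (1, v_d)\} \subseteq \RR \times \RR^n$; by construction, lattice points of $C$ with first coordinate $m$ biject with $m\Delta \cap \ZZ^n$.

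Next, choose a common denominator $N$ so that $w_i \defeq (N, Nv_i) \in \ZZ^{n+1}$, and consider the fundamental half-open parallelepiped $\Pi = \{\sum t_i w_i : 0 \leq t_i < 1\}$. Every lattice point in $C$ has a unique expression as $\pi + \sum_i a_i w_i$ with $\pi \in \Pi \cap \ZZ^{n+1}$ and $a_i \in \ZZ_{\geq 0}$, so grading by the first coordinate gives
\[
\sum_{m \geq 0} L_\Delta(m)\, z^m = \frac{h_\Pi(z)}{(1 - z^N)^{d+1}}, \qquad h_\Pi(z) \defeq \sum_{\pi \in \Pi \cap \ZZ^{n+1}} z^{\pi_0}.
\]
Partial fractions then exhibit $L_\Delta(m)$ as a quasi-polynomial of degree at most $d$ with period dividing $N$, and the top coefficient evaluates (up to normalization) to $\mathrm{vol}(\Delta)/d! > 0$, forcing degree exactly $d$. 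Summing over simplices of the triangulation yields quasi-polynomiality of $L_P(m)$.

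For Ehrhart-MacDonald reciprocity, the involution $\pi \mapsto \sum_i w_i - \pi$ bijectively exchanges lattice points of $\Pi$ with lattice points of the complementary half-open parallelepiped $\Pi' = \{\sum t_i w_i : 0 < t_i \leq 1\}$, while shifting the first coordinate by $N(d+1)$. Since lattice points of the relatively open cone $C^\circ$ decompose uniquely as $\pi' + \sum a_i w_i$ with $\pi' \in \Pi' \cap \ZZ^{n+1}$, a parallel generating function computation yields the reciprocity $L_\Delta(-m) = (-1)^d L_{\Delta^\circ}(m)$ at the simplex level. The main obstacle will be arranging the triangulation step to respect both directions of the claim simultaneously: one needs the half-open decomposition to partition $P$ and $P^\circ$ compatibly so that summing the simplex-level reciprocities produces the polytope-level reciprocity without leftover boundary terms. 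This is technical but standard, typically handled via a shelling-based half-open decomposition or through carefully tracked inclusion-exclusion on the face poset of the triangulation.
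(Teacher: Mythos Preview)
The paper does not supply its own proof of this proposition: it is quoted as a classical result, attributed to Ehrhart and MacDonald and cited via \cite{beck_ehrhart-macdonald_2005}, and then used as a black box in the proof of Corollary~\ref{cor:num-semigroups-quasipoly}. So there is nothing in the paper to compare your argument against.

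That said, your sketch is a correct outline of the standard cone-and-generating-function proof (essentially the one in Beck--Robins). The one place where your plan is somewhat hand-wavy is exactly where you flag it: arranging a half-open triangulation that simultaneously partitions $P$ and $P^\circ$ so that the simplex-level reciprocities sum cleanly. A clean way to avoid this bookkeeping is Stanley's approach: prove reciprocity directly at the level of the full cone $C=\mathrm{cone}(\{1\}\times P)$ using the identity $\sigma_{C^\circ}(z)=(-1)^{d+1}\sigma_C(1/z)$ for the integer-point transform, which follows from the parallelepiped involution you describe applied to a triangulation of $C$ together with M\"obius inversion over the face poset (or, equivalently, Brion--Lawrence--Varchenko polarization). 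This sidesteps the need to match boundary pieces between $P$ and $P^\circ$ by hand. Either route works, and your proposal is sound.
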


We can now prove the following.

\begin{corollary}
	\label{cor:num-semigroups-quasipoly}
	For fixed $n \in \NN$, the number of numerical semigroups with $n+1$ sporadic elements and Frobenius number $f$ is a quasi-polynomial function of $f$ with degree $n$ and constant leading coefficient $1/(2^n n!)$.
\end{corollary}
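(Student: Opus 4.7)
The plan is to combine the bijection of Theorem \ref{thm:num-semigroup-sr-polytope-bijection} with Ehrhart reciprocity (Proposition \ref{prop:ehrhart-qp}). Since every semigroup in $\Semi(n,f)$ has a unique sporadic addition table $\tau$, the bijection decomposes the count as
\[
N(n,f) = \sum_\tau \card{f \Poly_\tau \intersect \ZZ^n},
\]
where the sum runs over all truncated addition tables $\tau$ over $n$ elements (only finitely many of which contribute, since $\Poly_\tau$ is empty for the rest). Each $\Poly_\tau$ is the relative interior of a rational polytope $\overline{\Poly_\tau}$ whose defining inequalities and equations have coefficients in $\ZZ$, so Proposition \ref{prop:ehrhart-qp} applied together with Ehrhart--Macdonald reciprocity shows that each summand is a quasi-polynomial in $f$ of degree $d_\tau \defeq \dim \overline{\Poly_\tau}$ with leading coefficient equal to the relative volume of $\overline{\Poly_\tau}$. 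A finite sum of quasi-polynomials is again a quasi-polynomial, of degree equal to the maximum of the degrees of the summands, so $N(n,f)$ is a quasi-polynomial of degree $\max_\tau d_\tau$.

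Next I would identify which $\tau$ contribute to the top degree. Any sporadic relation $x_i + x_j = x_k$ enforced by a finite value of $\tau$ is a nontrivial linear equation on $\Poly_\tau$, so if $\tau$ takes any finite value then $\dim \Poly_\tau \leq n-1$. On the other hand, the unique addition table with $\tau(i,j) = \infty$ for all $i,j$ gives
\[
\Poly_\tau = \set{(x_1, \ldots, x_n) \in \RR^n}{0 < x_1 < \cdots < x_n < 1 \text{ and } x_i + x_j > 1 \text{ for all } i, j},
\]
which, since $2x_1 \leq x_i + x_j$ for all $i,j$, reduces to the relatively open simplex $\{\tfrac12 < x_1 < \cdots < x_n < 1\}$ of dimension $n$. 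Hence the degree of $N(n,f)$ is exactly $n$, and the leading coefficient comes solely from this one simplex.

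Finally, the leading coefficient of the Ehrhart quasi-polynomial of a full-dimensional rational polytope is its Euclidean volume, which is a constant independent of the residue class of $f$ modulo the period; this gives the ``constant leading coefficient'' part of the claim. The volume of the closed simplex $\{\tfrac12 \leq x_1 \leq \cdots \leq x_n \leq 1\}$ is obtained by taking the volume $(\tfrac12)^n$ of the cube $[\tfrac12,1]^n$ and dividing by $n!$ for the ordering constraint, yielding $1/(2^n n!)$, which matches the asserted leading coefficient. The only mildly subtle step in this outline is the dimension count verifying that every $\tau$ carrying even one finite entry drops the dimension of $\Poly_\tau$, but this is immediate from the linear-independence observation above.
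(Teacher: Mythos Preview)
Your proof is correct and follows essentially the same route as the paper's: decompose $N(n,f)$ via Theorem~\ref{thm:num-semigroup-sr-polytope-bijection} as a sum over addition tables, apply Ehrhart--Macdonald reciprocity to each $\Poly_\tau$, and isolate the single full-dimensional table $\tau_0\equiv\infty$ for the top-degree contribution. The only cosmetic difference is that the paper computes the leading coefficient by explicitly counting lattice points in $f\Poly_{\tau_0}$ as $\binom{f-\lfloor f/2\rfloor-1}{n}$ and expanding, whereas you obtain it more directly as the Euclidean volume $1/(2^n n!)$ of the simplex $\{\tfrac12\le x_1\le\cdots\le x_n\le 1\}$.
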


\begin{proof}
	For an addition table $\tau$, we write $\overline{\Poly}_\tau$ for the \emph{closed SR-polytope} of $\tau$, given by the same collection of restrictions as the open SR-polytope, but with all strict inequalities replaced by non-strict inequalities.  It can be shown that if $\Poly_\tau$ is nonempty, then it is equal to the relative interior of $\overline{\Poly}_\tau$.  Note that by Theorem \ref{thm:num-semigroup-sr-polytope-bijection}, this is the case exactly when there exists a numerical semigroup with sporadic addition table $\tau$.
	
	
	Let $n \in \NN$ be fixed, and let $\script{T}_n$ be the collection of addition tables over $n$ elements for which $\Poly_\tau$ is nonempty.  If $\Poly_\tau$ has dimension $d_{\tau} \geq 0$, then so does $\overline{\Poly}_\tau$, and so the lattice point counting function $L_{\overline{\Poly}_\tau}(f) = \card{f \overline{\Poly}_\tau \intersect \ZZ^n}$ is a quasi-polynomial $q_\tau$ of degree $d_\tau$ by Proposition \ref{prop:ehrhart-qp}.  Also by this proposition, we have
	\[
	L_{\overline{\Poly}_\tau}(-f) = q_\tau(-f) = (-1)^{d_\tau} \card{f \Poly_\tau \intersect \ZZ^n}
	\]
	which expresses the number of integer lattice points in the $f$-dilate of the open SR-polytope of $\tau$ as a quasi-polynomial.  By Theorem \ref{thm:num-semigroup-sr-polytope-bijection}, this gives the number of numerical semigroups with Frobenius number $f$ and sporadic addition table $\tau$ .  By summing over $\tau$, we conclude that
	\[
	N(n, f) = \sum_{\tau \in \script{T}_n} (-1)^{d_\tau} q_\tau(-f)
	\]
	is itself a quasi-polynomial in the Frobenius number $f$.
	
	To conclude the desired behavior of the highest order term of this quasi-polynomial, notice that only a single addition table defines a full-dimensional SR-polytope, namely, the addition table $\tau_0$ assigning $\infty$ to each pair $(i, j) \in [n]^2$.  This is because any finite value for $\tau(i, j)$ gives a sporadic relation in the definition of the SR-polytope, which lowers the dimension of the resulting convex body\footnote{The semigroups described by the addition table $\tau_0$ are the so-called \emph{elementary} numerical semigroups (with $n+1$ sporadic elements), defined as the semigroups with Frobenius number at most twice the multiplicity.}.
	
	Thus to understand the behavior of the highest order term of $N(n, f)$ as a quasi-polynomial in $f$, it is sufficient to analyze the behavior of $\card{f \Poly_{\tau_0} \intersect \ZZ^n}$.  The polytope $f \Poly_{\tau_0}$ is described by the inequalities $0 < x_1 < \cdots < x_n < f$ and $x_i + x_j > f$ for $i, j \in [n]$.  The integer solutions to these inequalities are thus the subsets of $(f/2, f) \intersect \ZZ$ of size $n$, which allows $\binom{f - \lfloor f/2 \rfloor - 1}{n}$ possibilities.  By parity, this is
	\[
	\binom{f/2 - 1/2}{n} \text{ for $f$ odd}, \qquad\qquad \binom{f/2 - 1}{n} \text{ for $f$ even}
	\]
	In either case, expanding the binomial coefficient as a polynomial in $f$ gives a highest order term of $f^n / (2^n n!)$, with lower order terms depending on the parity of $f$.  Since this expression is the only contribution of a degree $n$ term to the quasi-polynomial expression for $N(n, f)$, we conclude that the leading term of $N(n, f)$ is $f^n / (2^n n!)$ as claimed.
\end{proof}

\section{Generalization to affine semigroups} \label{sec:affine-setting}

Next we extend the approach of the previous section to a higher dimensional setting.  We begin by defining the class of semigroups which will be the focus of our efforts.

An \defn{affine semigroup} is the set of nonnegative integer linear combinations of a finite collection of integer vectors.  If $\vect{x}_1, \ldots, \vect{x}_k \in \ZZ^d$, then the affine semigroup $S$ generated by these vectors can be written as the image $X \NN^k$ of $\NN^k$ under multiplication by the matrix $X \in \ZZ^{d \times k}$ with columns given by the vectors $\vect{x}_i$.  Associated to an affine semigroup is a \defn{rational polyhedral cone} $\Cone$ given by the nonnegative \emph{real} linear combinations of the vectors $\vect{x}_i$, which can similarly be written as $X \RR_{\geq 0}^k$.  The cone $\Cone$ is called \defn{pointed} if there exists a hyperplane $H \subseteq \RR^d$ such that $\Cone \intersect H = \{0\}$, or equivalently, if the only element of $\Cone$ whose additive inverse is in $\Cone$ is 0.  An affine semigroup $S$ is called pointed if its associated rational polyhedral cone is.  An integer lattice point in $\Cone \setminus S$ is called a \defn{gap} of $S$.  A fact which will be useful later is that any submonoid of $\Cone \intersect \ZZ^d$ with finitely many gaps is an affine semigroup.  This is proven in \cite{gubeladze_polytopes_2009}, Lemma 2.9 (Gordan's lemma) and Corollary 2.10.

Throughout this section, we fix a pointed rational polyhedral cone $\Cone$, and restrict our attention to the collection of pointed affine semigroups with associated cone $\Cone$ and finitely many gaps.  We will adopt the notation of~\cite{GMV18} and refer to such semigroups as $\Cone$-semigroups.

In order to generalize SR-polytopes to the setting of affine semigroups, we need an appropriate substitute for the notion of sporadic elements of a numerical semigroup.  To this end, we use a $d$-dimension rational polytope $P \subseteq \RR^d$ containing a neighborhood of the origin, which acts as a scaling ``window'' in which the semigroup elements with a specified addition table reside.

\begin{definition}
	Let $P \subseteq \RR^d$ be a rational polytope containing a neighborhood of the origin in $\Cone$.  We say that $P$ is \defn{$\Cone$-compatible} if for any $x \in \Cone \setminus P$, the translated cone $x + \Cone$ is a subset of $\Cone \setminus P$.
\end{definition}

In particular, a polytope $P$ being $\Cone$-compatible ensures that the union of an affine semigroup $S \subseteq \Cone$ with the set $\{ 0 \} \union ((\Cone \setminus \alpha P) \intersect \ZZ^d)$ is again an affine semigroup for any $\alpha > 0$, which we will need in our subsequent arguments.

\begin{example}
	Suppose $\Cone'$ is a pointed rational polyhedral cone with $\Cone' \supseteq \Cone$, and let $\vect{v} \in (\Cone \intersect \QQ^d) \setminus \{0\}$.  Then
	\[
	P = (\vect{v} - \Cone') \intersect \Cone
	\]
	is a polytope containing a neighborhood of 0 in $\Cone$.  Notice that if $\vect{x}, \vect{y} \in \Cone$ have $\vect{x} + \vect{y} \in P$, then we can write $\vect{x} + \vect{y} = \vect{v} - \vect{w}$ for some $\vect{w} \in \Cone'$.  Then in particular, $\vect{x} = \vect{v} - (\vect{y} + \vect{w})$ and $\vect{y} = \vect{v} - (\vect{x} + \vect{w})$ are both elements of $P$ as well since $\vect{x} + \vect{w}, \vect{y} + \vect{w} \in \Cone'$.  Thus if $\vect{x} \in \Cone \setminus P$ and $\vect{y} \in \Cone$, then $\vect{x} + \vect{y} \in \Cone \setminus P$.  We conclude that $P$ is $\Cone$-compatible.
	
	A reasonably canonical polytope of this type is obtained by choosing $\Cone' = \Cone$, and choosing $\vect{v}$ to be the sum of the \emph{primitive ray generators} of $\Cone$, defined as the set of vectors of smallest length with integer coordinates on each of the extremal rays of $\Cone$.
\end{example}

\begin{example}
	Another simple $\Cone$-compatible polytope is obtained by truncating $\Cone$ using an appropriate hyperplane.  Let $\vect{w} \in \QQ^d$ such that $\vect{w} \cdot \vect{x} > 0$ for every non-zero $\vect{x} \in \Cone$, and let $H_{\vect{w}} = \set{\vect{x} \in \RR^d}{\vect{w} \cdot \vect{x} \leq 1}$.  Then we can define
	\[
	P = H_{\vect{w}} \intersect \Cone
	\]
	If $\vect{x} \in \Cone \setminus P$ and $\vect{y} \in \Cone$, then $\vect{w} \cdot \vect{x} > 1$ and $\vect{w} \cdot \vect{y} > 0$, and thus $\vect{w} \cdot (\vect{x} + \vect{y}) > 1$.  This implies that $\vect{x} + \vect{y} \in \Cone \setminus P$, and so we have that $P$ is $\Cone$-compatible.
\end{example}

For a fixed pointed rational polyhedral cone $\Cone \subseteq \RR^d$, a $\Cone$-compatible polytope $P$, and a positive integer $n$, we wish to describe the rate of growth (with respect to a scaling factor $\alpha$) of the number of affine semigroups $S$ which satisfy
\begin{itemize}
	\item $S \subseteq \Cone$
	\item $\card{S \intersect \alpha P} = n$
	\item All gaps of $S$ are contained in $\alpha P$
\end{itemize}
We write $\Semi_{\Cone,P}(n,\alpha)$ for the set of affine semigroups satisfying these three properties.  We will show in Theorem \ref{thm:BijectionAllDim} that there is a bijection between the semigroups in $\Semi_{\Cone,P}(n,\alpha)$ and the integer lattice points in the $\alpha$-dilates of a finite collection of rational polytopes in $(\RR^d)^k$ depending on $\Cone$, $P$, and $n$.

In the remainder of this section, we will adopt a modest abuse of notation by allowing ``polytopes'' to have both strict and non-strict bounding inequalities, meaning that some faces are allowed to be open.  This will allow us to define an appropriate class of ``polytopes'' for counting the affine semigroups in $\Semi_{\Cone,P}(n,\alpha)$, and in particular it is compatible with the results we use from Ehrhart theory.

When representing affine semigroups as integer lattice points, we will represent a semigroup by its $n$ elements $\vect{x}_1, \ldots, \vect{x}_n \in \ZZ^d$ which are inside the bounded set $S \intersect \alpha P$.  Each $\vect{x}_i$ has $d$ coordinates, and we will think of $\vect{x}_i$ as a column vector with $d$ entries $x_{i,j}$, $j \in [d]$.  Ultimately, we will describe inequalities in the variables $x_{i,j}$ which express the three properties satisfied by the semigroups in $\Semi_{\Cone,P}(n, \alpha)$.

We will need two additional tools to translate these properties into concrete inequalities.  First, recall (c.f.~\cite[Chapter~1]{ziegler_lectures_1995}) that any rational polyhedron $P$, which includes rational polytopes and rational polyhedral cones, can be written as a finite intersection of half-spaces: $P = \intersect_{i=1}^k H_i$, where $H_i = \set{\vect{x} \in \RR^d}{\lambda_i(\vect{x}) \geq b_i}$ are half-spaces defined by linear functionals $\lambda_i$ with integer coefficients, and integers $b_i$.  In particular, the vectors of coefficients of the linear functionals $\lambda_i$ give the \emph{inward pointing normal vectors} to the facets of $S$.

Second, we need a way to express a total ordering of vectors in $\RR^d$ in terms of linear inequalities.  This will ensure that we are able to interpret the affine semigroup elements inside $S \intersect \alpha P$ as a set without overcounting due to reordering of elements.  Recall that the \emph{lexicographic order} on $\RR^d$ is defined by $\vect{u} <_{\lex} \vect{v}$ if $\vect{u} \neq \vect{v}$ and the first non-zero entry of $\vect{v} - \vect{u}$ is positive.  The relation $<_{\lex}$ is a total ordering on $\RR^d$, and further satisfies the property that if $\vect{w} \in \RR^d$ and $\vect{u} <_{\lex} \vect{v}$ then $\vect{u} + \vect{w} <_{\lex} \vect{v} + \vect{w}$.  More generally, if $A$ is an invertible $d \times d$ matrix, we can put a total order on $\RR^d$ by
\[
\vect{u} <_{A} \vect{v} \iff A\vect{u} <_{\lex} A\vect{v}
\]
In fact, every \emph{monomial order} on $\ZZ^d$ arises in this way from an integer-valued matrix $A$ (see~\cite{Robbiano-1986}).  In the following we will define a family of inequalities based on lexicographic order, but in principle the construction can be carried out using any ordering $<_A$.

We now would like to express with concrete inequalities the following condition on vectors $\vect{x}_1, \ldots, \vect{x}_n \in \RR^d$
\begin{equation}
	\label{eq:ineqs0}
	\vect{x}_1 <_{\lex} \vect{x}_2 <_{\lex} \ldots <_{\lex} \vect{x}_n
\end{equation}
It is not possible to express this condition using a single set of linear relations because the lexicographic ordering involves different cases depending on which coordinate is being compared between two vectors.  Accordingly, the vectors satisfying this condition can instead be expressed as a union of $d^{n-1}$ different rational polyhedral cones, corresponding to the $d$ possible coordinates to compare for each of the $n-1$ lexicographic comparisons.  For instance, there are $d$ different cones in $(\RR^d)^2$ whose disjoint union is the set of pairs $(\vect{x}_1,\vect{x}_2)$ satisfying $\vect{x}_1<_{\lex}\vect{x}_2$:
\begin{align*}
	\vect{x}_{1,1} &< \vect{x}_{2,1}\\
	\vect{x}_{1,1} &= \vect{x}_{2,1}, \text{ and } \vect{x}_{1,2} < \vect{x}_{2,2}\\
	&         \hspace{2cm} \vdots \\
	\vect{x}_{1,1} &= \vect{x}_{2,1}, \text{ and } \vect{x}_{1,2} = \vect{x}_{2,2}, \text{ and } \; \ldots \; \text{ and } \vect{x}_{1,d-1} < \vect{x}_{2,d-1} \\
	\vect{x}_{1,1} &= \vect{x}_{2,1}, \text{ and } \vect{x}_{1,2} = \vect{x}_{2,2}, \text{ and } \; \ldots \; \text{ and }  \vect{x}_{1,d-1} = \vect{x}_{2,d-1}, \text{ and } \vect{x}_{1,d} < \vect{x}_{2,d},
\end{align*}

We now define sets analogous to the SR-polytopes of Section \ref{sec:polytope-decomposition}, which will be used to enumerate the desired classes of $\Cone$-semigroups.

\begin{definition}
	\label{def:relpolytopehighdim}
	Let $\Cone \subseteq \RR^d$ be a pointed rational polyhedral cone, let $P \subseteq \RR^d$ be a $\Cone$-compatible polytope, let $n \in \NN$, and let $\tau$ be a truncated addition table on $n$ elements.  Then we define $\Poly_\tau^{(\Cone,P)} \subseteq \RR^{d \times n}$ to be the set of tuples $(\vect{x}_1, \vect{x_2}, \ldots, \vect{x}_n) \in \RR^{d \times n}$ satisfying:
	\begin{itemize}
		\item\label{orderineq} \textit{Order inequalities}: $\vect{x}_1 <_{\lex} \vect{x}_2 <_{\lex} \cdots <_{\lex} \vect{x}_n$
		\item\label{coneineq} \textit{Cone inequalities}: $\vect{x}_i \in \Cone \setminus \{0\}$ for $i = 1, \ldots, n$
		\item\label{polytopeineq} \textit{Polytope inequalities}: $\vect{x}_i \in P$ for $i = 1, \ldots, n$
		\item \textit{Sporadic relations}: $\vect{x}_i + \vect{x}_j = \vect{x}_k$ for $\tau(i, j) = k < \infty$
		\item\label{indepineq} \textit{Truncation inequalities}: $\vect{x}_i + \vect{x}_j \notin P$ for $\tau(i, j) = \infty$
	\end{itemize}
\end{definition}

Once $\Poly_\tau^{(\Cone,P)} \subseteq \RR^{d \times n}$ has been defined, it is fairly straightforward to give the intended generalization of Theorem~\ref{thm:num-semigroup-sr-polytope-bijection}, now associating integer lattice points with affine semigroups.  If $P$ is a $\Cone$-compatible polytope and $S$ is a $\Cone$-semigroup, then we say that $S$ has associated addition table $\tau$ in $P$ if its nonzero elements $\vect{x}_1 <_{\lex} \cdots <_{\lex} \vect{x}_n$ in $P$ satisfy $\vect{x}_i + \vect{x}_j = \vect{x}_k$ when $\tau(i, j) = k$, and satisfy $\vect{x}_i + \vect{x}_j \in \Cone \setminus P$ when $\tau(i, j) = \infty$.

\begin{theorem}
	\label{thm:BijectionAllDim}
	Let $\Cone \subseteq \RR^d$ be a pointed rational polyhedral cone, let $P \subseteq \RR^d$ be a $\Cone$-compatible polytope, let $n, \alpha \in \NN$, and let $\tau$ be a truncated addition table on $n$ elements.  Then the integer lattice points of $\alpha \Poly_\tau^{(\Cone,P)}$ are in bijection with the set of affine semigroups $S \in \Semi_{\Cone,P}(n, \alpha)$ whose nonzero elements $\vect{x}_1 <_{\lex} \cdots <_{\lex} \vect{x}_n$ in $\alpha P$ have associated addition table $\tau$ in $\alpha P$, via the correspondence
	\[
	(\vect{x}_1, \ldots, \vect{x}_n) \mapsto \{0\} \union \{\vect{x}_1, \ldots, \vect{x}_n\} \union ((\Cone \setminus \alpha P) \intersect \ZZ^d)
	\]
\end{theorem}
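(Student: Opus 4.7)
The plan is to mirror the argument for Theorem~\ref{thm:num-semigroup-sr-polytope-bijection}, with three adjustments dictated by the affine setting: the lexicographic order $<_{\lex}$ replaces the numerical order on $\NN$; the $\Cone$-compatibility of $P$ takes over the role that integrality played (via $x_i + x_j > f$ forcing $x_i + x_j \geq f+1$) in the one-dimensional case; and Gordan's lemma (Lemma~2.9 and Corollary~2.10 of \cite{gubeladze_polytopes_2009}) is invoked to upgrade the cofinite submonoid we construct to an actual $\Cone$-semigroup.

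For the forward direction, start with $(\vect{x}_1, \ldots, \vect{x}_n) \in \alpha\Poly_\tau^{(\Cone,P)} \intersect \ZZ^{d \times n}$ and set $S \defeq \{0\} \union \{\vect{x}_1, \ldots, \vect{x}_n\} \union ((\Cone \setminus \alpha P) \intersect \ZZ^d)$. The order, cone, and polytope inequalities place the $\vect{x}_i$ as distinct nonzero lattice points of $\Cone \intersect \alpha P$, so $S \subseteq \Cone \intersect \ZZ^d$, contains $0$, and has finite complement in $\Cone \intersect \ZZ^d$ since $\alpha P$ is bounded. To check closure under addition, consider a sum $\vect{y} + \vect{z}$ of two elements of $S$. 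Sums involving $0$ are trivial. If $\vect{y} = \vect{x}_i$ and $\vect{z} = \vect{x}_j$ both lie among the $\vect{x}_k$, then either $\tau(i,j) = k < \infty$ and the sporadic relation gives $\vect{x}_i + \vect{x}_j = \vect{x}_k \in S$, or $\tau(i,j) = \infty$ and the truncation inequality puts $\vect{x}_i + \vect{x}_j$ into $\Cone \setminus \alpha P$, which lies in $S$ since the sum is an integer vector. If at least one of $\vect{y}, \vect{z}$ lies in $(\Cone \setminus \alpha P) \intersect \ZZ^d$, then $\Cone$-compatibility of $P$ forces $\vect{y} + \vect{z} \in \Cone \setminus \alpha P$, again in $S$. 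Hence $S$ is a cofinite submonoid of $\Cone \intersect \ZZ^d$, and Gordan's lemma identifies it with an affine semigroup; by construction its nonzero elements in $\alpha P$ are exactly $\vect{x}_1, \ldots, \vect{x}_n$ with associated addition table $\tau$, and its gaps all lie in $\alpha P$, so $S \in \Semi_{\Cone,P}(n,\alpha)$.

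For the reverse direction, given $S \in \Semi_{\Cone,P}(n,\alpha)$ whose nonzero elements in $\alpha P$ have associated addition table $\tau$, enumerate those elements lexicographically as $\vect{x}_1 <_{\lex} \cdots <_{\lex} \vect{x}_n$. The cone and polytope inequalities follow from the defining properties of $\Semi_{\Cone,P}(n,\alpha)$, the order inequalities from the lexicographic enumeration, and the sporadic relations and truncation inequalities from the definition of ``associated addition table''. The two maps are mutually inverse: the lexicographic enumeration recovers the tuple uniquely from $S$, and applying the formula to such a tuple recovers $S$ because the gap condition in $\Semi_{\Cone,P}(n,\alpha)$ forces $S$ to contain $(\Cone \setminus \alpha P) \intersect \ZZ^d$. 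The main obstacle, and the substantive step not present in Theorem~\ref{thm:num-semigroup-sr-polytope-bijection}, is the closure-under-addition argument in the mixed case where one summand lies outside $\alpha P$; the $\Cone$-compatibility hypothesis is precisely what is needed to handle this case, and motivates its inclusion in the definition.
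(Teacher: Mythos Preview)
Your proof is correct and follows essentially the same approach as the paper's: both argue the forward direction by checking closure under addition in cases (trivial, two sporadic elements via the sporadic relations/truncation inequalities, and the mixed case via $\Cone$-compatibility), and both handle the reverse direction by reading the defining conditions of $\alpha\Poly_\tau^{(\Cone,P)}$ directly off the definition of $\Semi_{\Cone,P}(n,\alpha)$ and of the associated addition table. The only cosmetic differences are that the paper makes the $\Cone$-compatibility step explicit by scaling by $1/\alpha$ (whereas you implicitly use that $\alpha P$ is again $\Cone$-compatible), and the paper defers the Gordan's lemma citation to the surrounding text rather than invoking it inside the proof.
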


\begin{proof}
	Let $\varphi$ denote the correspondence defined above.  The order inequalities for $\Poly_\tau^{(\Cone,P)}$ imply that each point $(\vect{x}_1, \ldots, \vect{x}_n)$ corresponds with a unique set $\{\vect{x}_1, \ldots, \vect{x}_n\}$, and the cone inequalities and polytope inequalities ensure that $\{\vect{x}_1, \ldots, \vect{x}_n\}$ is disjoint from $\{0\} \union ((\Cone \setminus \alpha P) \intersect \ZZ^d)$.  This implies that $\varphi$ is bijective.
	
	If $S \in \Semi_{\Cone,P}(n, \alpha)$ has nonzero elements $\vect{x}_1 <_{\lex} \cdots <_{\lex} \vect{x}_n$ in $\alpha P$ with associated addition table $\tau$ in $\alpha P$, then its preimage under $\varphi$ is $(\vect{x}_1, \ldots, \vect{x}_n)$.  This point satisfies the order inequalities and the polytope inequalities by the definition of the vectors $\vect{x}_i$, and the cone inequalities by the definition of $\Semi_{\Cone,P}(n, \alpha)$.  It likewise satisfies the sporadic relations and the truncation inequalities for $\tau$ by the definition of the associated addition table.
	
	Now if $(\vect{x}_1, \ldots, \vect{x}_n) \in \alpha \Poly_\tau^{(\Cone,P)} \intersect \ZZ^d$, we need to show that its image $S = \{0\} \union \{\vect{x}_1, \ldots, \vect{x}_n\} \union ((\Cone \setminus \alpha P) \intersect \ZZ^d)$ under $\varphi$ is an affine semigroup with the desired properties.  By the cone inequalities of $\alpha \Poly_\tau^{(\Cone,P)}$, $S$ is contained in $\Cone$, and by the polytope inequalities, $(S \intersect \alpha P) \setminus \{0\} = \{\vect{x}_1, \ldots \vect{x}_n\}$ consists of $n$ nonzero elements, indexed here in lex increasing order.  Since $\alpha P$ is bounded, the fact that $(\Cone \setminus \alpha P) \intersect \ZZ^d \subseteq S$ implies that $S$ has finite complement in $\Cone \intersect \ZZ^d$.
	
	To see that $S$ is an affine semigroup, it is now enough to show that it is closed under taking sums.  Let $x, y \in S$.  If $x = 0$ then $x + y = y \in S$.  If $x \in \Cone \setminus \alpha P$, then $x / \alpha \in \Cone\setminus P$ and $y / \alpha \in \Cone$, so by $\Cone$-compatibility of $P$ we have that $(x+y)/\alpha \in \Cone \setminus P$, and thus that $x + y \in \Cone \setminus \alpha P$.  Since $x, y$ are integer lattice points, this implies $x + y \in (\Cone \setminus \alpha P) \intersect \ZZ^d \subseteq S$.  The last remaining case is if $x = x_i$ and $y = x_j$ for some $i, j \in [n]$.  If $\tau(i, j) = \infty$, then $x_i + x_j \in \Cone \setminus \alpha P$ by the truncation inequalities, so again because $x, y$ are integer lattice points, their sum is in $S$.  If $\tau(i, j) = k \in [n]$ for some $k$, then $x_i + x_j = x_k \in S$ by the sporadic relations.  We conclude that $S$ is an affine semigroup.
	
	Finally, the sporadic relations and the truncation inequalities imply that $S$ has associated addition table $\tau$ in $\alpha P$, as required.
\end{proof}

We now make use of this characterization to demonstrate quasi-polynomial growth of the affine semigroups in $\Semi_{\Cone,P}(n, \alpha)$.  The description of the set $\Poly_\tau^{(\Cone,P)}$ is messier than the corresponding set for numerical semigroups, so we will need the following characterization.

\begin{lemma}\label{lem:HighDimPolytopes}
	With notation as in Definition \ref{def:relpolytopehighdim}, the set $\Poly_\tau^{(\Cone,P)}$ is a finite union of rational polytopes in $\RR^{d \times n}$ with mixed open and closed faces.
\end{lemma}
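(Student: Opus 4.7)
The plan is to exhibit $\Poly_\tau^{(\Cone,P)}$ as a finite union of polyhedral pieces by enumerating the combinatorial data that resolves the two sources of non-polyhedrality in Definition~\ref{def:relpolytopehighdim}: the lexicographic order in the order inequalities, and the set complements in the cone inequalities (through $\Cone\setminus\{0\}$) and the truncation inequalities (through $\vect{x}_i+\vect{x}_j\notin P$).

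First, I would fix half-space presentations
\[
P = \bigcap_{k=1}^{p} \set{x\in\RR^d}{\mu_k(x) \geq c_k}, \qquad \Cone = \bigcap_{k=1}^{q} \set{x\in\RR^d}{\lambda_k(x)\geq 0},
\]
with $\mu_k,\lambda_k$ rational linear functionals. Because $\Cone$ is pointed, there is a rational linear functional $w$ with $w(x)>0$ for every nonzero $x\in\Cone$ (take any rational point in the relative interior of the dual cone $\Cone^\vee$), so the cone inequality $\vect{x}_i\in\Cone\setminus\{0\}$ is equivalent to the finite polyhedral condition $\lambda_1(\vect{x}_i)\geq 0,\ldots,\lambda_q(\vect{x}_i)\geq 0,\ w(\vect{x}_i)>0$. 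The polytope inequalities $\vect{x}_i\in P$ and the sporadic relations $\vect{x}_i+\vect{x}_j=\vect{x}_k$ are already polyhedral, so only the order and truncation inequalities require further decomposition.

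Next, I would decompose each order inequality $\vect{x}_\ell <_\lex \vect{x}_{\ell+1}$ as the disjoint union over $c_\ell\in[d]$ of the polyhedral conditions ``$x_{\ell,j}=x_{\ell+1,j}$ for $j<c_\ell$, and $x_{\ell,c_\ell}<x_{\ell+1,c_\ell}$''. Letting $\vec{c}=(c_1,\ldots,c_{n-1})$ range over $[d]^{n-1}$ produces $d^{n-1}$ polyhedral witness cases that jointly cover the chain of order inequalities. Similarly, a truncation inequality $\vect{x}_i+\vect{x}_j\notin P$ holds precisely when $\mu_k(\vect{x}_i+\vect{x}_j)<c_k$ for some facet index $k\in[p]$, so a choice of witness function $\kappa$ assigning an index $\kappa(i,j)\in[p]$ to each pair with $\tau(i,j)=\infty$ yields finitely many polyhedral witness cases whose union covers all truncation inequalities.

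Finally, for each combined witness $(\vec{c},\kappa)$, intersecting the resulting constraints gives a rational polytope in $\RR^{d\times n}$ with mixed open and closed faces, which is bounded because each $\vect{x}_i$ is constrained to lie in the bounded set $P$. Taking the union over the finitely many choices $(\vec{c},\kappa)$ yields $\Poly_\tau^{(\Cone,P)}$: the inclusion $\supseteq$ is immediate since each piece imposes all of the defining conditions of Definition~\ref{def:relpolytopehighdim}, and for the inclusion $\subseteq$, any point of $\Poly_\tau^{(\Cone,P)}$ admits a valid witness by taking $c_\ell$ to be the first coordinate where $\vect{x}_\ell$ and $\vect{x}_{\ell+1}$ differ and $\kappa(i,j)$ to be any facet of $P$ strictly separated from $\vect{x}_i+\vect{x}_j$. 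The only mildly delicate ingredient, which is not really an obstacle, is the existence of the strict functional $w$ on the pointed cone, which is standard; the rest is a routine bookkeeping argument that translates each source of non-polyhedrality into a finite index set.
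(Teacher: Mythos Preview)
Your proof is correct and follows essentially the same approach as the paper: both decompose the order inequalities into $d^{n-1}$ polyhedral cases and the truncation inequalities by choosing, for each pair with $\tau(i,j)=\infty$, a facet of $P$ that is violated by $\vect{x}_i+\vect{x}_j$, then intersect with the remaining (already polyhedral) conditions and use boundedness of $P$. Your handling of $\Cone\setminus\{0\}$ via a strict supporting functional $w$ is slightly more explicit than the paper's, and you allow overlapping witness pieces for the truncation inequalities whereas the paper selects the \emph{first} violated facet to make the pieces disjoint, but neither difference is material.
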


\begin{proof}
	Notice that the set $\Poly_\tau^{(\Cone,P)}$ can be thought of as the intersection of the five sets of points in $\RR^{d \times n}$ satisfying each of the five classes of restrictions from Definition \ref{def:relpolytopehighdim} individually.  Thus, it is enough for each of these five sets to consist of finitely many rational polyhedra (not necessarily bounded) with mixed open and closed faces, and for at least one of the sets to be bounded.
	
	We have already seen that the order inequalities are satisfied by the points in a union of $d^{n-1}$ rational polyhedral cones.  The cone inequalities and polytope inequalities describe sets which are a single polyhedron, and the points satisfying the polytope inequalities are also bounded because $P$ is.  The sporadic relations are satisfied by a linear subspace of $\RR^{d \times n}$, which is polyhedral.
	
	A little more explanation is needed to show that the truncation inequalities describe a set which is a finite union of rational polyhedra.  The polytope $P$ can be described as the collection of points $\vect{x} \in \RR^d$ satisfying the inequality $A\vect{x} \geq b$ for some matrix $A \in \RR^{\ell \times d}$ and some vector $\vect{b} \in \RR^\ell$.  Then in order for a vector $\vect{x}$ to not lie in $P$, it has to violate at least one of these inequalities.  We can organize the complement of $P$ into (disjoint) polyhedral regions according to which inequality is the first that fails:
	\begin{align*}
		(A\vect{x})_1 &<    b_1 \\
		(A\vect{x})_1 &\geq b_1, \text{ and } (A\vect{x})_2 < b_2 \\
		&         \hspace{2cm} \vdots \\
		(A\vect{x})_1 &\geq b_1, \text{ and } (A\vect{x})_2 \geq b_2, \text{ and } \; \ldots \; \text{ and } (A\vect{x})_{\ell-1} < b_{\ell-1} \\
		(A\vect{x})_1 &\geq b_1, \text{ and } (A\vect{x})_2 \geq b_2, \text{ and } \; \ldots \; \text{ and }  (A\vect{x})_{\ell-1} \geq b_{\ell-1}, \text{ and } (A\vect{x})_\ell < b_\ell,
	\end{align*}
	Similarly to the order inequalities, the set of points satisfying the truncation inequalities in $\RR^{d \times n}$ is then given by $\ell^k$ polyhedral regions, $k = \card{\tau^{-1}(\infty)}$, each determined by choosing one of the above sets of inequalities for each point $\vect{x}_i + \vect{x}_j$ with $\tau(i, j) = \infty$.
\end{proof}

We can now apply Ehrhart theory to describe the growth of the semigroups in $\Semi_{\Cone,P}(n, \alpha)$ as a function of $\alpha$.  The classical theorem of Ehrhart given in Proposition \ref{prop:ehrhart-qp} is stated in Section \ref{sec:polytope-decomposition} in terms of closed polytopes and their relative interiors, but it also extends to the counting function of lattice points in rational polytopes with mixed open and closed faces:

\begin{proposition}
	\label{prop:open-closed-ehrhart-qp}
	Let $P \subseteq \RR^n$ be a rational convex polytope of dimension $d$ with mixed open and closed faces.  Then the function
	\[
	L_P(\alpha) \defeq \card{\alpha P \intersect \ZZ^n}
	\]
	is a quasi-polynomial function of degree $d$.
\end{proposition}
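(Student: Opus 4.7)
The plan is to reduce to the classical Ehrhart--MacDonald result (Proposition~\ref{prop:ehrhart-qp}) by decomposing $P$ as a disjoint union of relative interiors of faces of its closure. First I would fix a precise working convention for ``rational convex polytope with mixed open and closed faces'': such a set is specified by a list of half-space inequalities $a_i \cdot x \geq b_i$ in which some are stipulated to be strict. Its closure $\overline{P}$ is the ordinary rational polytope obtained by relaxing every strict inequality to a non-strict one, and a point of $\overline{P}$ lies in $P$ if and only if its canonical face (the unique face containing it in relative interior) does not meet any of the equalities coming from strict inequalities of $P$. Collecting those admissible faces into a set $\mathcal{F}$, one obtains the disjoint decomposition
\[
P = \bigsqcup_{F \in \mathcal{F}} F^\circ,
\]
where $F^\circ$ denotes the relative interior of $F$.

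Next I would invoke Proposition~\ref{prop:ehrhart-qp} face-by-face: since each $F \in \mathcal{F}$ is an ordinary rational polytope, $L_F$ is a quasi-polynomial of degree $\dim F$, and Ehrhart--MacDonald reciprocity gives $L_{F^\circ}(\alpha) = (-1)^{\dim F} L_F(-\alpha)$, which is likewise a quasi-polynomial of degree $\dim F$. Summing over the finite collection $\mathcal{F}$,
\[
L_P(\alpha) = \sum_{F \in \mathcal{F}} L_{F^\circ}(\alpha),
\]
is a finite sum of quasi-polynomials and hence a quasi-polynomial.

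The final step is to show the degree is exactly $d$. The hypothesis $\dim P = d$ means $P$ contains an open subset of the affine hull of $\overline{P}$, which forces the unique $d$-dimensional face $\overline{P}$ itself to lie in $\mathcal{F}$. The corresponding summand $L_{\overline{P}^\circ}(\alpha)$ has leading coefficient equal to the normalized $d$-dimensional volume of $\overline{P}$, which is strictly positive, while every other summand contributes degree at most $d - 1$. Hence no cancellation can occur in the leading term, and $\deg L_P = d$.

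I expect the main obstacle to be conceptual rather than technical: pinning down the ``mixed faces'' convention precisely enough that the disjoint decomposition $P = \bigsqcup_{F \in \mathcal{F}} F^\circ$ is literally correct, and that each piece is genuinely the relative interior of a rational polytope to which the closed Ehrhart theorem applies. Once that framework is in place, everything reduces to a standard finite sum of quasi-polynomials together with a leading-coefficient comparison.
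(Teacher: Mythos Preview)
Your proof is correct and follows essentially the same approach as the paper, which only sketches the argument in one line as ``a suitable application of inclusion-exclusion to the counting functions of omitted faces.'' You organize the reduction via the disjoint stratification of $\overline{P}$ into relative interiors of its faces rather than via inclusion-exclusion on the closed omitted faces, but these are two presentations of the same idea, and your version has the virtue of making the degree computation explicit.
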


This can be seen as a consequence of a suitable application of inclusion-exclusion to the counting functions of omitted faces.  We conclude with the main enumerative result for the affine setting.

\begin{corollary}
	\label{cor:affine-quasipolynomial-growth}
	Let $\Cone \subseteq \RR^d$ be a pointed rational polyhedral cone, and let $P \subseteq \RR^d$ be a $\Cone$-compatible polytope.  Then for fixed $n \in \NN$, the number of $\Cone$-semigroups with $n+1$ elements in the polytope $\alpha P$ is a quasi-polynomial function of $\alpha$ with degree $nd$.
\end{corollary}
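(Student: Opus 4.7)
The strategy mirrors the proof of Corollary~\ref{cor:num-semigroups-quasipoly}, now using the higher-dimensional analogues developed above. First, I would partition $\Semi_{\Cone,P}(n+1,\alpha)$ into finitely many classes indexed by the truncated addition tables $\tau$ on $n$ elements, so that by Theorem~\ref{thm:BijectionAllDim} the class associated to $\tau$ is in bijection with the integer lattice points of $\alpha\Poly_\tau^{(\Cone,P)}$. Lemma~\ref{lem:HighDimPolytopes} expresses $\Poly_\tau^{(\Cone,P)}$ as a finite union of rational polytopes with mixed open and closed faces; by refining using the case splits from its proof (the chosen lex coordinate for each adjacent comparison, and the first-failing facet inequality of $P$ for each pair $(i,j)\in\tau^{-1}(\infty)$), these pieces can be taken to be pairwise disjoint. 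Proposition~\ref{prop:open-closed-ehrhart-qp} applied to each piece gives a quasi-polynomial in $\alpha$, and since a finite sum of quasi-polynomials is again quasi-polynomial, $|\Semi_{\Cone,P}(n+1,\alpha)|$ is itself a quasi-polynomial in $\alpha$.

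For the degree analysis, I expect the dominant contribution to come from a single piece of a single $\Poly_\tau^{(\Cone,P)}$, namely the one associated to the ``elementary'' addition table $\tau_0$ assigning $\infty$ to every pair, together with the component of the lex decomposition in which every adjacent comparison $\vect{x}_i<_{\lex}\vect{x}_{i+1}$ is realized by a strict inequality in the first coordinate. This piece is cut out of $\RR^{dn}$ entirely by strict and non-strict linear inequalities with no equalities, so whenever it is nonempty it is $nd$-dimensional and its Ehrhart quasi-polynomial has degree $nd$. For any $\tau\neq\tau_0$, at least one sporadic relation $\vect{x}_i+\vect{x}_j=\vect{x}_k$ imposes a codimension-$d$ linear constraint, forcing every piece of $\Poly_\tau^{(\Cone,P)}$ to have dimension at most $(n-1)d$; and for $\tau_0$, any other lex component introduces a coordinate equality that similarly drops dimension. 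Hence the leading term of the total quasi-polynomial has degree exactly $nd$, matching the claim.

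The main technical obstacle is verifying that the distinguished full-dimensional piece for $\tau_0$ is actually nonempty, since otherwise its Ehrhart count would be identically zero and would not witness the degree. To see nonemptiness for all sufficiently large $\alpha$, I would pick a vector $\vect{v}$ in the interior of $\Cone$ with positive first coordinate and with $\vect{v}\in P$, and choose $n$ integer lattice points along the ray $\RR_{>0}\vect{v}$ lying close enough to the boundary of $\alpha P$ that all pairwise sums escape $\alpha P$ (possible because $P$ is $\Cone$-compatible and bounded, and lattice points are dense along a rational ray for large $\alpha$); a small open neighborhood of any such tuple in $\RR^{dn}$ is then contained in the piece, witnessing its $nd$-dimensionality. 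Beyond this, the remaining verifications---disjointness of the refined pieces, additivity of degrees under finite sums of quasi-polynomials, and the dimension bound for $\tau\neq\tau_0$---are routine once the geometric decomposition from Lemma~\ref{lem:HighDimPolytopes} is in hand.
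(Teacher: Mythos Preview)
Your quasi-polynomial argument---partitioning by addition tables~$\tau$, invoking Theorem~\ref{thm:BijectionAllDim}, decomposing each $\Poly_\tau^{(\Cone,P)}$ via Lemma~\ref{lem:HighDimPolytopes} into disjoint mixed-face polytopes, and summing the resulting Ehrhart quasi-polynomials from Proposition~\ref{prop:open-closed-ehrhart-qp}---is exactly the paper's proof.

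Your degree argument, however, takes a different route. The paper does not analyze the dimensions of the pieces at all; instead it gives a direct lower bound by choosing a point $p\in P\cap\Cone$ of maximal norm, setting $P'=P\cap\{\vect{x}:\vect{x}\cdot p/|p|>|p|/2\}$, and observing that any $n$ lattice points of $\alpha(P'\cap\Cone)$ yield a valid semigroup (all pairwise sums exceed $\alpha|p|$ in the $p$-direction and hence leave $\alpha P$). Since $P'\cap\Cone$ has positive $d$-volume $V$, this produces $\binom{g(\alpha)}{n}\sim V^n\alpha^{nd}/n!$ semigroups, pinning the degree at $nd$ without any dimension bookkeeping. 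Your approach instead argues that only $\tau_0$ can contribute a piece of dimension $nd$, and that such a piece exists; this is valid in principle, and your observation that any $\tau\neq\tau_0$ loses $d$ dimensions to a sporadic relation is correct.

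Two caveats on your version. First, even within the first-coordinate lex component of $\Poly_{\tau_0}^{(\Cone,P)}$ there is not a \emph{single} piece: the truncation condition $\vect{x}_i+\vect{x}_j\notin P$ also splits into $\ell^{|\tau_0^{-1}(\infty)|}$ regions, several of which may be full-dimensional. This does not harm the degree claim (all leading coefficients are nonnegative volumes, so there is no cancellation), but your phrase ``a single piece'' is inaccurate. Second, your nonemptiness witness---$n$ lattice points on a ray $\RR_{>0}\vect{v}$---is only a one-parameter family, so the ``small open neighborhood'' step is doing real work: you must check that each $t_i\vect{v}$ lies in the interior of $\alpha P$ (not just in $\alpha P$), which requires some care about where the boundary of $P$ meets the ray. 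The paper's half-space construction sidesteps this by producing a genuinely $d$-dimensional region $P'\cap\Cone$ from the start, so the $nd$-dimensionality is immediate rather than obtained by perturbation.
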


\begin{proof}
	For each addition table $\tau$ on $n$ elements, the integer lattice points in the set $\alpha \Poly_\tau^{(\Cone,P)}$ correspond with the $\Cone$-semigroups of the desired type with associated addition table $\tau$ in $\alpha P$ by Theorem \ref{thm:BijectionAllDim}.  By Lemma \ref{lem:HighDimPolytopes}, $\Poly_\tau^{(\Cone,P)}$ consists of a finite union of polytopes with mixed open and closed faces, so the number $f_\tau(\alpha)$ of lattice points contained in $\alpha \Poly_\tau^{(\Cone,P)}$ is given by the finite sum of quasi-polynomials associated with its polyhedral components by Proposition \ref{prop:open-closed-ehrhart-qp}.  The total number of semigroups can thus be represented as the sum $f(\alpha) = \sum_{\tau} f_{\tau}(\alpha)$ over all addition tables $\tau$ on $n$ elements, which is itself a quasi-polynomial.
	
	To show that $f(\alpha)$ has degree $nd$, note that since Proposition \ref{prop:open-closed-ehrhart-qp} is being applied to polytopes embedded in $\RR^{d \times n}$, their maximum dimension is $nd$, so each quasi-polynomial in the sum representing $f$ has maximum degree $nd$.  Thus it is enough to construct a family of $\Cone$-semigroups of the desired type with growth at least a constant times $\alpha^{nd}$.  To this end, let $p \in P \intersect \Cone$ be a point of maximal distance from the origin, and let
	\[
	P' = P \intersect \set{x \in \RR^d}{x \cdot (p/\abs{p}) > \abs{p}/2}
	\]
	This is the set of points in $P$ whose component in the $p$ direction is larger than $\abs{p}/2$.  Then $P' \intersect \Cone$ is a polytope with positive volume $V$, so the number of integer lattice points $g(\alpha)$ in $\alpha (P' \intersect \Cone) \subseteq \Cone \intersect \alpha P$ satisfies $\lim_\alpha g(\alpha) / \alpha^d = V$.  Any subset of $n$ points in $\alpha (\Cone \intersect P')$ corresponds with a $\Cone$-semigroup by adjoining $0$ and $(\Cone \setminus \alpha P) \intersect \ZZ^d$ because the sum of any two such points is outside of $\alpha P$: if $x, y \in \alpha(P' \intersect \Cone)$, then $(x + y) \cdot p / \abs{p} > \alpha \abs{p}$, so the component of $x+y$ in the $p$ direction is larger than $\alpha \abs{p}$.  Since $p$ was chosen to have maximal distance from the origin among points in $P$, this implies that $x+y \notin \alpha P$, and so $x + y \in (\Cone \setminus \alpha P) \intersect \ZZ^d$.
	
	The number of semigroups of this form is then given by the binomial expression $\binom{g(\alpha)}{n}$, which can be written as a polynomial $h$ of degree $n$ in the variable $g(\alpha)$, with leading coefficient $1/n!$.  In particular, it satisfies
	\[
	\lim_\alpha h(\alpha) / \alpha^{nd} = V / n! > 0
	\]
	This provides a lower bound on $f(\alpha)$ of order $\alpha^{nd}$, which implies that the degree of $f$ as a quasi-polynomial is at least $nd$.
\end{proof}

\nocite{rosales_garcia_09}

\end{document}